\makeatletter \@addtoreset{equation}{section} \makeatother
\newtheorem{theorem}{Theorem}[section]
\newtheorem{definition}{Definition}[section]
\newtheorem{lemma}{Lemma}[section]
\newtheorem{remark}{Remark}[section]
\begin{document}	
	
	
	
		
		
		
		\title{\bf  \sc		
		On elliptic problems with Choquard term  and  singular nonlinearity}
		
		\author{\sc Debajyoti Choudhuri$^{a}$, Du\v{s}an D.  Repov\v{s}$^{b,}$\footnote{Corresponding author%
		: dusan.repovs@guest.arnes.si}, Kamel Saoudi$^c$ \\
	\small{$^{a}$Department of Mathematics, National Institute of Technology Rourkela,}\\
	\small{Rourkela, 769008, Odisha, India.	{\it Email: dc.iit12@gmail.com}}\\
	\small{$^b$Faculty of Education and Faculty of Mathematics and Physics, University of Ljubljana,}\\
	\small{\& Institute of Mathematics, Physics and Mechanics, Ljubljana, 1000, Slovenia. {\it Email: dusan.repovs@guest.arnes.si}}\\	
	\small{$^c$ Basic and Applied Scientific Research Center, Imam Abdulrahman Bin Faisal University,}\\
	\small{Dammam 34212, Saudi Arabia. {\it Email: kmsaoudi@iau.edu.sa}}\\}			

\date{}

\maketitle
				\begin{abstract}
					\noindent Using variational  methods, we establish the existence of infinitely many solutions to an elliptic problem driven by a Choquard term and a singular nonlinearity. We further show that if the problem has a positive solution, then it is bounded a.e. in the domain $\Omega$ and is H\"{o}lder continuous.
					\begin{flushleft}
						{\it Keywords}:~  Choquard term, variational method, Dual fountain theorem.\\
						{\it Math. Subj. Classif.}:~35R35, 35Q35, 35J20, 46E35.
					\end{flushleft}
				\end{abstract}
				
				\section{Introduction}
				We shall study the following problem:
				\begin{equation}\label{main_1}
				\left\{ \begin{array}{ll}
				-\Delta u+V(x)u=\mu(J_{\alpha}*|u|^p)|u|^{p-2}u+\lambda |u|^{-\beta-1}u~\text{in}~\Omega,\\
				u=0 \ \ \ \ \ \ \ \ \ \ \ \ \ \ \ \ \ \ \ \ \ \ \ \ \ \ \ \ \ \ \ \ \ \ \ \ \ \ \ \ \ \ \ \  \ \ \ \ \ \ \ \ \ \ \ \ \ \text{on}~\partial\Omega,
				\end{array} 
				\right.
				\end{equation}
				where 
				$$0<\beta<1, 0<\alpha<N, \mu\in\mathbb{R}, \lambda>0, p\in[2_{\alpha},2_{\alpha}^*], \  \ 2_{\alpha}=\frac{N+\alpha}{N}, 2_{\alpha}^{*}=\frac{N+\alpha}{N-2}, $$
				and $V$ is a continuous function (popularly called the {\it potential well}), satisfying the following conditions
				\begin{description}
					\item[$(V_1)$] $V:\Omega\to\mathbb{R}$ is continuous and there exists a constant $V_0>0$ such that $V(x)>V_0~\text{for all}~x\in\Omega$. Here, $N\geq 2$ and $\Omega\subset\mathbb{R}^N$ is a bounded domain,
					\item[$(V_2)$] $V:\Omega\to\mathbb{R}$ satisfies the integrability condition $$\int_{\Omega}\frac{1}{V(x)}dx<\infty.$$
				\end{description}
				The symbol $(*)$ in problem \eqref{main_1} denotes the convolution. We note that both $2_{\alpha} $ and $ 2_{\alpha}^*$ are less than $2^*=\frac{2N}{N-2}$ which is the Sobolev critical exponent.
				The Riesz potential can be described as $$J_{\alpha}(x)=\frac{A_{\alpha}(N)}{|x|^{N-\alpha}}, \quad A_{\alpha}(N)=\frac{\Gamma(N-\alpha/2)}{\Gamma(\alpha/2)\pi^{N/2}2^{\alpha}}.$$ 
				\\
				\noindent
				There are many articles pertaining to elliptic nonlocal problems driven by the Choquard term with various assumptions on the potential function $V$. For example, the readers may refer to {\sc Moroz-Schaftingen} \cite{moroz1}, {\sc Carvalho et al.} \cite{car1}, {\sc Mukherjee-Sreenadh} \cite{muk1}, and the references therein. The consideration of a upper critical Choquard term with singularity and Radon measure, albeit in a bounded domain, can be found in {\sc Panda et al} \cite{panda1}, who established the existence of positive solutions.\\				
				 A systematic study of elliptic problems driven by singular nonlinearity began four decades ago with the seminal work by {\sc Lazer-McKenna} \cite{LM}. This gave a new direction for research to the field of elliptic PDEs. We refer to some of the important works that answer the question of existence of solution to singularity driven elliptic problems in {\sc Giacomoni et al.} \cite{gia1}, {\sc Saoudi et al.} \cite{ghosh1}, and the references therein.
				
				\subsection{Significance of the considered problem}
				Problem \eqref{main_1} addresses a wider class of problem in the sense that we have considered $\mu\in\mathbb{R}$. This is an improvement of the existing works in which the authors required the coefficient of a Choquard term to be strictly positive. One can see that if $\lambda=0$, $\mu=1$, $V(x)=c$, $N=3$ in problem \eqref{main_1}, then we have 
				\begin{align}\label{lions}
				\begin{split}
				-\Delta u+cu-\left(\int_{\mathbb{R}^3}u^2(y)V(x-y)dy\right)u(x)=0.
				\end{split}
				\end{align}
				Here $V$ is a positive function. This problem was studied by {\sc Lions} in \cite{lions1} for an unbounded domain $\mathbb{R}^3$. So the study of problem \eqref{main_1} is not very new and has been of interest for quite a long time.\\
				Furthermore, when $V(x)\equiv 0$, $\mu=0$, the problem reduces to the classical problem similar to the one considered by {\sc Lazer-McKenna} \cite{LM}. However, a combination of the Choquard term and a singular term with $\mu\in\mathbb{R}$ is a new idea. The main results of our paper are stated below.
				\begin{theorem}\label{MP_thm}
					For any $\lambda>0$ and $\mu\in\mathbb{R}$, the functional $I_{\lambda,\mu}$ has a sequence of critical points $(u_n)$ such that $I_{\lambda,\mu}(u_n)<0$ and $I_{\lambda,\mu}(u_n)\to 0$, as $n\to\infty$.
				\end{theorem}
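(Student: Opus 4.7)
The plan is to apply the Dual Fountain Theorem of Bartsch--Willem to the energy functional associated with \eqref{main_1}. I would work in the Hilbert space $X:=\{u\in H_0^1(\Omega):\int_\Omega V(x)u^2\,dx<\infty\}$ endowed with the inner product induced by $\|u\|^2=\int_\Omega(|\nabla u|^2+V(x)u^2)\,dx$; conditions $(V_1)$ and $(V_2)$ guarantee that $X$ is a separable Hilbert space compactly embedded in $L^q(\Omega)$ for every $q\in[1,2^*)$. Correspondingly, the functional
\[
I_{\lambda,\mu}(u)=\frac{1}{2}\|u\|^2-\frac{\mu}{2p}\int_\Omega\!\!\int_\Omega J_\alpha(x-y)|u(x)|^p|u(y)|^p\,dx\,dy-\frac{\lambda}{1-\beta}\int_\Omega|u|^{1-\beta}\,dx
\]
is well defined on $X$ thanks to the Hardy--Littlewood--Sobolev inequality (recall $p\in[2_\alpha,2_\alpha^*]$) and the fact that $0<\beta<1$. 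Since $I_{\lambda,\mu}$ is only continuous (not $C^1$) due to the singular term, I would first truncate the singularity or use the standard procedure of working with the cut-off $\tfrac{\lambda}{1-\beta}\int_\Omega (|u|+\varepsilon)^{1-\beta}$ and passing to the limit $\varepsilon\to 0^+$, as is customary for such Lazer--McKenna type functionals.

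Next I would fix a Hilbert basis $(e_j)_{j\geq 1}$ of $X$ and set $X_j=\mathbb{R}e_j$, $Y_k=\bigoplus_{j=1}^k X_j$, $Z_k=\overline{\bigoplus_{j\geq k}X_j}$, and use the well known fact that for any subcritical exponent $q$,
\[
\beta_k(q):=\sup_{u\in Z_k,\ \|u\|=1}\|u\|_{L^q(\Omega)}\longrightarrow 0\quad\text{as}\quad k\to\infty.
\]
The crux is then to verify the four hypotheses of the Dual Fountain Theorem. For (B1), on $Z_k$ with $\|u\|=\rho_k$ one controls the Choquard term via Hardy--Littlewood--Sobolev by $C\|u\|_{L^{2Np/(N+\alpha)}}^{2p}\leq C\beta_k(2Np/(N+\alpha))^{2p}\rho_k^{2p}$ and the singular term by $C\beta_k(1-\beta)^{1-\beta}\rho_k^{1-\beta}$; choosing $\rho_k\to 0^+$ appropriately one makes $\inf_{Z_k\cap\partial B_{\rho_k}}I_{\lambda,\mu}\geq 0$. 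For (B2), since $Y_k$ is finite-dimensional all norms are equivalent, so on a sufficiently small sphere of radius $r_k<\rho_k$ the negative contribution $-\tfrac{\lambda}{1-\beta}\int_\Omega|u|^{1-\beta}$ (of order $r_k^{1-\beta}$ with $1-\beta<2$) dominates the quadratic and Choquard terms, forcing $\max_{Y_k\cap\partial B_{r_k}}I_{\lambda,\mu}<0$. For (B3) the same estimates as in (B1), evaluated on the ball $\|u\|\leq\rho_k$, give $\inf_{Z_k\cap B_{\rho_k}}I_{\lambda,\mu}\to 0^-$ as $k\to\infty$.

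The main technical obstacle is the verification of the $(PS)^*_c$ condition at negative levels: if $(u_{n_k})\subset Y_{n_k}$ satisfies $I_{\lambda,\mu}(u_{n_k})\to c<0$ and $(I_{\lambda,\mu}|_{Y_{n_k}})'(u_{n_k})\to 0$, one must extract a subsequence converging strongly in $X$ to a critical point. Boundedness follows by combining the Palais--Smale identity with the usual Ambrosetti--Rabinowitz style manipulation adapted to the Choquard nonlinearity (using $2p>2$ when $\mu>0$, and a direct coercivity argument when $\mu\leq 0$), noting that the singular term is absorbed because $1-\beta<2$. The weak limit $u$ is then upgraded to strong convergence using the compact embedding $X\hookrightarrow L^q(\Omega)$ for subcritical $q$, the Brezis--Lieb lemma for the Choquard term (since $p<2_\alpha^*$ we stay strictly below criticality), and the uniform integrability of $|u_{n_k}|^{1-\beta}$, which handles the non-smooth part. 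Once $(PS)^*_c$ is established for every $c<0$, the Dual Fountain Theorem delivers a sequence of critical points $(u_n)$ with $I_{\lambda,\mu}(u_n)<0$ and $I_{\lambda,\mu}(u_n)\to 0$, as required.
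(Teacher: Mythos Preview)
Your overall strategy---apply the Dual Fountain Theorem after setting up the decomposition $Y_k\oplus Z_k$ and checking (B1)--(B3) via the decay of $\beta_k$---matches the paper's. The verification of the geometric conditions (B1)--(B3) is essentially the same in both accounts.

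There are, however, two substantive divergences.

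\emph{Regularization.} You propose to deal with the lack of $C^1$-regularity by smoothing the singular term $|u|^{1-\beta}$. The paper does the opposite: it leaves the singularity untouched and instead multiplies the Choquard nonlinearity by a cutoff $\mathcal{C}(\|u\|^2/2)$ supported in a small ball of $X$. This has two effects that your route does not provide: it makes the modified Choquard term \emph{uniformly} small (so that the inequality $|\mu|\tfrac{C}{2p}\|u\|^{2p}\le \tfrac14\|u\|^2$ holds on the whole truncated range, irrespective of the sign or size of $\mu$), and it guarantees that any critical point with $\|u\|\le\sqrt{2l}$ solves the original problem. Your $\varepsilon$-regularization of the singularity would still require a separate argument to control the Choquard term for arbitrary $\mu\in\mathbb{R}$, and a passage to the limit $\varepsilon\to 0$ that you only gesture at.

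\emph{Compactness at the critical exponent.} The genuine gap is your sentence ``since $p<2_\alpha^*$ we stay strictly below criticality''. The theorem is stated for $p\in[2_\alpha,2_\alpha^*]$, so the upper critical case $p=2_\alpha^*$ is included, and there the HLS estimate forces $|u|^p\in L^{2N/(N+\alpha)}$, i.e.\ $u\in L^{2^*}$, for which the embedding $X\hookrightarrow L^{2^*}$ is not compact. Your Brezis--Lieb/compact-embedding argument therefore fails at that endpoint. The paper handles this by showing that $(PS)_c$ still holds provided the level $c$ lies below an explicit threshold
\[
c^*=\tfrac12\Big(\tfrac{\alpha+2}{N+\alpha}\Big)S^{\frac{N-2}{\alpha+2}}-\tfrac12\Big(\tfrac{N+\alpha}{\alpha+2}\Big)^{\frac{1+\beta}{1-\beta}}\Big(\lambda C\big(\tfrac{1}{1-\beta}-\tfrac{1}{2\cdot 2_\alpha^*}\big)\Big)^{\frac{2}{1+\beta}},
\]
obtained by combining the Brezis--Lieb splitting with the definition of the best constant $S$ and a Young-inequality trade-off; since the critical values produced by the Dual Fountain construction are negative and tend to $0$, they sit below $c^*$ and compactness is recovered. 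Without this threshold analysis your proof is incomplete for $p=2_\alpha^*$ (and the paper also singles out $p=2_\alpha$ with an analogous bound $c^{**}$).
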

				\noindent
			Our second main result establishes the boundedness of positive weak solutions of \eqref{main_1}. To the best of our knowledge, this was unknown for elliptic PDEs involving a Choquard term and a singular nonlinearity. 
				\begin{theorem}\label{boundedness_of_soln}
					Let $0<u\in X$ be a weak solution of problem \eqref{main_1}. Then  $u\in L^{\infty}(\Omega)$. 
				\end{theorem}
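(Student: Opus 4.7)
The plan is to run a De Giorgi--Moser truncation--iteration, adapted to accommodate both the nonlocal Choquard nonlinearity and the singular power. For parameters $M>0$ and $s\ge 0$, I would set $u_M:=\min(u,M)$ and test \eqref{main_1} against $\varphi:=u\,u_M^{2s}\in X$, which is admissible since $u_M$ is bounded and $u\in X$ vanishes on $\partial\Omega$. Positivity of $u$ guarantees that the singular contribution $\lambda\int u^{-\beta}u_M^{2s}$ is well defined: on $\{u\le M\}$ it equals $\lambda\int u^{2s-\beta}$, a positive subcritical power since $\beta<1$, and on $\{u>M\}$ the integrand is pointwise bounded by $M^{2s-\beta}$.

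Inserting $\varphi$ into the weak form and using $(V_1)$ produces
\begin{equation*}
\int_\Omega u_M^{2s}|\nabla u|^2 + V_0\int_\Omega u^2 u_M^{2s} \;\le\; |\mu|\int_\Omega(J_\alpha* u^p)\,u^{p}u_M^{2s} + \lambda\int_\Omega u^{1-\beta}u_M^{2s}.
\end{equation*}
For $w:=u\,u_M^{s}$, the chain rule together with the Sobolev embedding $X\hookrightarrow L^{2^*}(\Omega)$ gives $\|w\|_{2^*}^{2}\le C(s+1)^{2}\int_\Omega u_M^{2s}|\nabla u|^{2}$. The singular contribution is manifestly subcritical, contributing only a perturbative term of the form $\lambda\|u\|_{\sigma}^{\sigma}$ with $\sigma=2s+1-\beta$ to the recursion.

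The main obstacle is the Choquard term. I would invoke the Hardy--Littlewood--Sobolev inequality with the symmetric exponent $\rho:=2N/(N+\alpha)$ to obtain
\begin{equation*}
\int_\Omega(J_\alpha* u^p)\,u^{p}u_M^{2s}\le C\,\|u\|_{p\rho}^{p}\;\|u^{p}u_M^{2s}\|_{\rho}.
\end{equation*}
The upper bound $p\le 2_\alpha^{*}=(N+\alpha)/(N-2)$ is exactly equivalent to $p\rho\le 2^{*}$, so the first factor is controlled by the Sobolev embedding. The second factor is then estimated by H\"{o}lder interpolation between $\|w\|_{2^{*}}$ (which carries the main growth in $s$) and lower-order $L^q$ norms of $u$; the resulting small multiple of $\|w\|_{2^{*}}^{2}$ is absorbed into the left-hand side for $s$ fixed and $M$ large, while the lower-order remainder feeds the next iteration step.

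Letting $M\to\infty$ by monotone convergence and iterating along a geometric sequence of exponents $q_{k+1}=\chi q_{k}$ with $\chi>1$ delivered by the Sobolev step, I would obtain an Alikakos--Moser recursion $\|u\|_{q_{k+1}}\le C_k\|u\|_{q_{k}}^{\theta_k}$ whose compounded constants remain finite, proving $\|u\|_{L^\infty(\Omega)}<\infty$. The sign of $\mu$ is immaterial: for $\mu\le 0$ the Choquard contribution has a favourable sign and can simply be discarded, while for $\mu>0$ the HLS-based absorption above is required. The most delicate point is that when $p=2_\alpha^{*}$ the Choquard term sits exactly at critical scaling, so the hypothesis $p\in[2_\alpha,2_\alpha^{*}]$ is sharp for the iteration to close.
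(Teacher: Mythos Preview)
Your strategy is a valid alternative but follows a genuinely different route from the paper. The paper runs a De~Giorgi--type level-set iteration: it rescales to $w=u/d$, sets $D_n=1-3^{-n}$, $v_n=(w-D_n)^+$, tests the equation with $v_{n+1}$, splits the Choquard convolution over $\{v(y)\le D_{n+1}\}$ and $\{v(y)>D_{n+1}\}$, and obtains a nonlinear recursion $V_{n+1}\le \mathcal{D}^{\,n+2}(V_n^{2_\alpha^*}+V_n^{2^*/2})$ for $V_n=\|v_n\|_{2^*}$; geometric decay of $V_n$ then forces $(w-1)^+=0$. Your proposal is instead a Moser/Brezis--Kato iteration via the power test functions $\varphi=u\,u_M^{2s}$ and a bootstrap on the Lebesgue exponent. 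The De~Giorgi scheme has the advantage that the critical Choquard term enters the recursion only through strictly superlinear powers of $V_n$, so no absorption step is needed; your Moser scheme is conceptually cleaner in the subcritical range but requires an extra device at $p=2_\alpha^*$.

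Two points in your sketch deserve tightening. First, a bookkeeping slip: the singular contribution after testing with $\varphi=u\,u_M^{2s}$ is $\lambda\int u^{1-\beta}u_M^{2s}$ (as in your displayed inequality), not $\lambda\int u^{-\beta}u_M^{2s}$. Second, and more substantively, the absorption at the critical endpoint $p=2_\alpha^*$ is not achieved by ``$M$ large'': writing $u^{p}u_M^{2s}=u^{p-2}w^2$ and applying H\"older with $2b=2^*$ yields $\|u^{p-2}w^2\|_{\rho}\le \|u^{p-2}\|_{N/2}\,\|w\|_{2^*}^{2}$, and $\|u\|_{2^*}$ is a fixed constant independent of $M$, so nothing becomes small as $M\to\infty$. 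What you need is a Brezis--Kato splitting of the coefficient, $u^{p-2}=u^{p-2}\chi_{\{u>K\}}+u^{p-2}\chi_{\{u\le K\}}$, choosing $K$ large so that $\|u^{p-2}\chi_{\{u>K\}}\|_{N/2}$ is small enough to absorb, while the bounded piece $u^{p-2}\chi_{\{u\le K\}}$ produces the lower-order term that feeds the iteration. With this correction your Moser argument closes and gives the same $L^\infty$ conclusion as the paper.
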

				
				Our third main result shows that such positive solutions are not only bounded but are also H\"{o}lder continuous regular.
				
				\begin{theorem}\label{reg}
					Let $u>0$ be a weak solution of problem \eqref{main_1}. Then $u\in C_{\text{loc}}^{\theta}(\Omega)$ for some $\theta\in(0,1)$.
				\end{theorem}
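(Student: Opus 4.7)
The plan is to combine Theorem~\ref{boundedness_of_soln} with classical interior $L^q$-regularity theory: once I show that $-\Delta u \in L^\infty_{\mathrm{loc}}(\Omega)$, the Calder\'on-Zygmund estimates yield $u \in W^{2,q}_{\mathrm{loc}}(\Omega)$ for every $q<\infty$, and Morrey's embedding then gives $u \in C^{1,\theta}_{\mathrm{loc}}(\Omega) \subset C^\theta_{\mathrm{loc}}(\Omega)$ for any $\theta\in(0,1)$. Fix a compactly contained subdomain $\Omega' \Subset \Omega$. The linear term $V(x) u$ lies in $L^\infty(\Omega)$ because $V$ is continuous on $\overline\Omega$ and $u\in L^\infty(\Omega)$ by Theorem~\ref{boundedness_of_soln}. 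For the Choquard nonlinearity, since $N-\alpha<N$ the Riesz kernel is locally integrable, so the boundedness of $\Omega$ gives
\[(J_\alpha \ast |u|^p)(x) \le A_\alpha(N)\,\|u\|_{\infty}^{p}\int_\Omega \frac{dy}{|x-y|^{N-\alpha}} \le C\]
uniformly in $x \in \overline\Omega$, whence $\mu(J_\alpha\ast|u|^p)|u|^{p-2}u \in L^\infty(\Omega)$. The only delicate term is $\lambda u^{-\beta}$, which belongs to $L^\infty(\Omega')$ precisely when $u$ admits a strictly positive lower bound on $\Omega'$.

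Producing this interior positivity is the main obstacle. My plan is to compare $u$ with an explicit subsolution of the form $\underline u_\varepsilon := \varepsilon\,\phi_1^{2/(1+\beta)}$, where $\phi_1>0$ is the first Dirichlet eigenfunction of $-\Delta$ on $\Omega$. A direct computation, using $-\Delta\phi_1=\lambda_1\phi_1$ together with the identity $\gamma(1+\beta)=2$ for $\gamma:=2/(1+\beta)$, shows that $-\Delta\underline u_\varepsilon + V\underline u_\varepsilon$ contains the distinguished negative contribution $-\varepsilon\gamma(\gamma-1)\phi_1^{-\beta\gamma}|\nabla\phi_1|^2$, while the right-hand side $\lambda\underline u_\varepsilon^{-\beta}=\lambda\varepsilon^{-\beta}\phi_1^{-\beta\gamma}$ blows up like $\varepsilon^{-\beta}$ as $\varepsilon\to 0^+$ and the bounded Choquard contribution $\mu(J_\alpha\ast\underline u_\varepsilon^{\,p})\underline u_\varepsilon^{\,p-1}$ is of order $\varepsilon^{2p-1}$. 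Consequently, for $\varepsilon>0$ sufficiently small $\underline u_\varepsilon$ is a weak subsolution of \eqref{main_1}. Testing the difference of the weak formulations of $\underline u_\varepsilon$ (subsolution) and $u$ (supersolution) against $(\underline u_\varepsilon - u)^+\in W^{1,2}_0(\Omega)$ and exploiting the monotonicity of $s\mapsto -s^{-\beta}$ gives $\underline u_\varepsilon \le u$ a.e.\ in $\Omega$. Since Hopf's lemma yields $\phi_1\ge c_{\Omega'}>0$ on $\Omega'$, it follows that $u \ge \varepsilon\, c_{\Omega'}^{\gamma}>0$ on $\Omega'$.

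The delicate step inside this comparison argument is the sign-indefinite, nonlocal Choquard contribution, which is not monotone as a pointwise function of $u$. I would estimate the resulting cross term
\[\int_\Omega \mu\bigl[(J_\alpha\ast u^p)u^{p-1}-(J_\alpha\ast\underline u_\varepsilon^{\,p})\underline u_\varepsilon^{\,p-1}\bigr](\underline u_\varepsilon - u)^+\,dx\]
by H\"older's inequality combined with the Hardy-Littlewood-Sobolev inequality, and absorb it into the coercive part of the bilinear form on $W^{1,2}_0(\Omega)$ by shrinking $\varepsilon$ further if necessary. Once the interior positive lower bound on $u$ is secured, we have $\lambda u^{-\beta}\in L^\infty(\Omega')$, hence $-\Delta u \in L^\infty_{\mathrm{loc}}(\Omega)$, and the bootstrap to $C^\theta_{\mathrm{loc}}$ outlined in the first paragraph completes the proof.
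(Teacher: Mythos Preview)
Your overall strategy is sound and in fact more careful than the paper's. The paper does not construct a subsolution at all: it simply asserts that $u\geq C_K>0$ on every compact $K\Subset\Omega$ on the grounds that otherwise the singular integral $\int_\Omega u^{-\beta}\phi\,dx$ in the weak formulation would diverge, then observes that the right-hand side of the equation is locally bounded and invokes a black-box H\"older regularity theorem (Iannizzotto--Mosconi--Squassina) to conclude $u\in C^\theta_{\mathrm{loc}}$. Your route through Calder\'on--Zygmund and Morrey is cleaner and even yields the stronger conclusion $u\in C^{1,\theta}_{\mathrm{loc}}$.

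There is, however, a genuine gap in your comparison step. When you test the difference of the two weak formulations against $(\underline u_\varepsilon-u)^+$, the Choquard cross-term
\[
\mu\int_\Omega\bigl[(J_\alpha\ast u^{p})u^{p-1}-(J_\alpha\ast\underline u_\varepsilon^{\,p})\underline u_\varepsilon^{\,p-1}\bigr](\underline u_\varepsilon-u)^+\,dx
\]
does \emph{not} become small by shrinking $\varepsilon$: the piece $(J_\alpha\ast u^{p})u^{p-1}$ is fixed by the given solution $u$ and is of order one regardless of $\varepsilon$. The H\"older/HLS bound you propose therefore only yields $\|(\underline u_\varepsilon-u)^+\|\leq C$, not $=0$, and the absorption argument as written does not close.

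The easy repair is to remove the nonlocal term from the comparison entirely. Since $u\in L^\infty(\Omega)$ by Theorem~\ref{boundedness_of_soln} and the Riesz convolution is uniformly bounded on $\Omega$, there exists $M>0$ with $|\mu(J_\alpha\ast w^{p})w^{p-1}|\leq M$ for both $w=u$ and $w=\underline u_\varepsilon$ (say for all $\varepsilon\leq 1$). Then $u$ is a weak \emph{supersolution}, and for $\varepsilon$ small enough $\underline u_\varepsilon$ is a weak \emph{subsolution}, of the purely local problem
\[
-\Delta v+V(x)v=-M+\lambda v^{-\beta},
\]
whose right-hand side is strictly decreasing in $v$. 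For this monotone problem the standard comparison (test with $(\underline u_\varepsilon-u)^+$) gives $\underline u_\varepsilon\leq u$ a.e.\ immediately, and the rest of your argument goes through unchanged.
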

				\noindent
				We complete this section with a description of the structure of our paper. 
				In Section $2$, the mathematical preliminaries are discussed. 
				In Section $3$, Theorem 1.1 is proved. 
				In Section $4$, Theorem 1.2 is proved. 
				In Section $5$, Theorem 1.3 is proved. 
				We end the paper in Section 6, with an interesting application of the Choquard term will be presented.
			
				\section{Preliminaries}
				
				We shall seek for a solution in the space $X$ which is defined as follows:
				$$X=\{v\in H_0^{1,2}(\Omega):\int_{\Omega}V(x)v^2dx<\infty\}.$$
				and endowed with the norm
				$$\|v\|=\left(\int_{\Omega}(|\nabla v|^2+V(x)v^2)dx\right)^{1/2}.$$
				We define the inner product on $X$ as follows:
				\begin{align}\label{innerproduct}
				\langle\phi,\psi\rangle&=\int_{\Omega}(\nabla\phi\cdot\nabla\psi+V(x)\phi\psi)dx~\text{where}~\phi, \psi\in X.
				\end{align}
				A consequence of the well known embedding result is that $X\hookrightarrow L^r(\Omega)$, for each $r\in[1,2^*]$. Likewise, the embedding $X\hookrightarrow L^r(\Omega)$ is compact for $r\in[1,2^*)$. \\
			The following is the energy functional corresponding to problem \eqref{main_1}:
				\begin{align}\label{energy_func}
				I(u)&=\frac{1}{2}\|u\|^2-\frac{\mu}{2p}\int_{\Omega}(J_{\alpha}*(u^+(x))^p)(u^+(x))^pdx-\frac{\lambda}{1-\beta}\int_{\Omega}(u^+(x))^{1-\beta}dx,~u\in X.
				\end{align}
				However, $I$ is not a $C^1$ functional and hence the results from the premise of the variational methods cannot be used. However, we shall overcome this difficulty by defining a {\it cutoff} energy function as follows:
				$$\mathcal{C}(s)=\begin{cases}
				1, ~\text{if}~ |s|\leq l\\
				\eta ~\text{is decreasing, if}~ l\leq s\leq 2l\\
				0,~\text{if}~ |s|\geq 2l.
				\end{cases}$$
				Clearly, $0\leq\mathcal{C}(s)\leq 1$ for every $s\in\mathbb{R}$. Accordingly, we consider the following the modified problem
				\begin{align*}
				\begin{split}
				(\mathcal{P}):~~~~~~~~-&\Delta u+V(x)u=\mu g(u)+\lambda |u|^{-\beta-1}u~\text{in}~\Omega\\
								u&=0~\text{on}~\partial\Omega.
				\end{split}
				\end{align*}
				Here,
				\begin{align*}
				\begin{split}
				g(u(x))&=\{(J_{\alpha}*|u(x)|^p)|u(x)|^{p-2}u\}\mathcal{C}\left(\frac{\|u\|^2}{2}\right),\\
				G(u(x))&=\frac{1}{2p}\{(J_{\alpha}*|u(x)|^p)|u(x)|^{p}\}\mathcal{C}\left(\frac{\|u\|^2}{2}\right).
				\end{split}
				\end{align*}
				The corresponding energy functional is thus defined as
				\begin{align}\label{cutoff_energy_func}
				I_{\lambda,\mu}(u)&=\frac{1}{2}\|u\|^2-\frac{\mu}{2p}\int_{\Omega}(J_{\alpha}*|u(x)|^p)|u(x)|^pdx-\lambda\int_{\Omega}G(u(x))dx,~u\in X.
				\end{align}
		
				Corresponding to $I_{\lambda,\mu}$, we define a modified version of problem \eqref{main_1} as follows:

				\begin{definition}\label{WS}
					A function $u\in X$ is said to be a {\it weak solution} to $(\mathcal{P})$ if 
					\begin{align}\label{WS_Defn}
					\begin{split}
					\langle u,\phi\rangle-\mu\int_{\Omega}(J_{\alpha}*|u(x)|^p)|u(x)|^{p-2}u\phi(x) dx-\frac{\mu}{2p}\mathcal{C}'\left(\frac{\|u\|^2}{2}\right)\langle u,\phi \rangle\int_{\Omega}(J_{\alpha}*|u(x)|^p)|u(x)|^{p}\phi(x) dx\\
					-\lambda\int_{\Omega}|u|^{-\beta-1}u\phi(x) dx=0~\text{for every}~\phi\in X.
					\end{split}
					\end{align}
				\end{definition}
				
				\noindent We observe that if $\|u\|\leq\sqrt{2l}$ and obeys the identity in \eqref{WS_Defn}, then $u$ also is a weak solution to problem \eqref{main_1}. Henceforth, we shall use the word solution instead of weak solution.\\
			We also refer to an important inequality called the Hardy-Littlewood-Sobolev (HLS) inequality  (see  \cite[Lemma $2.1$]{moroz1}) which is usually used in the literature to estimate the Choquard term in the literature.\\
				In the course of the proof of the main result, we shall need the best Sobolev constant which is defined by
				$$S:=\underset{u\in X\setminus\{0\}}{\inf}\frac{\int_{\Omega}|\nabla u|^2dx}{\left(\int_{\Omega}\int_{\Omega}\frac{|u(x)|^{2_{\alpha}^*}|u(y)|^{2_{\alpha}^*}}{|x-y|^{N-\alpha}}dxdy\right)^{1/2_{\alpha}^*}}.$$
				For all other preliminary information we refer the reader to the comprehensive monograph by {\sc Papageorgiou et al.} \cite{PPR}.
			
				\section{Proof of Theorem \ref{MP_thm}}
				The first step involved in variational methods to verify whether a functional has a critical point or not is to see if the functional exhibits a mountain pass geometry or not. The following lemma verifies this.
				\begin{lemma}\label{MP_geometry}
					There exists $r>0$ such that $\underset{\|u\|=r}{\inf}{I}_{\lambda,\mu}(u)>0.$
				\end{lemma}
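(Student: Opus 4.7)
My plan is to estimate each negative term of $I_{\lambda,\mu}(u)$ on the sphere $\{\|u\|=r\}$ by an explicit power of $r$, and then to choose $r$ so that the quadratic contribution $\tfrac12 r^2$ dominates. For the Choquard term, the Hardy--Littlewood--Sobolev inequality gives
$$\int_\Omega (J_\alpha\ast|u|^p)|u|^p\,dx \le C\,\|u\|_{L^{2Np/(N+\alpha)}}^{2p},$$
and since $p\in[2_\alpha,2_\alpha^*]$ the exponent $\tfrac{2Np}{N+\alpha}$ ranges over $[2,2^*]$ (the two endpoints being attained at $p=2_\alpha$ and $p=2_\alpha^*$), so the continuous embedding $X\hookrightarrow L^{2Np/(N+\alpha)}(\Omega)$ promotes the estimate to $C_1\|u\|^{2p}=C_1r^{2p}$; the term $\int_\Omega G(u)\,dx$ carries the extra factor $\mathcal C(\|u\|^2/2)\in[0,1]$ and obeys the same bound, indeed vanishing once $\|u\|^2/2\ge 2l$. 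For the singular part, since $1-\beta\in(0,1)$, Hölder's inequality combined with $X\hookrightarrow L^2(\Omega)$ yields
$$\int_\Omega (u^+)^{1-\beta}\,dx\le |\Omega|^{(1+\beta)/2}\|u\|_{L^2}^{1-\beta}\le C_2 r^{1-\beta}.$$

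Combining these ingredients, I would obtain an inequality of the form
$$I_{\lambda,\mu}(u)\ \ge\ \tfrac12 r^2 - C_1(|\mu|+\lambda)\,r^{2p} - C_2\lambda\,r^{1-\beta}\ =:\ \varphi(r),$$
uniformly on $\|u\|=r$. Because $1-\beta<2<2p$, the scalar function $\varphi$ vanishes at $r=0$, decays to $-\infty$ for large $r$, and possesses a strict interior maximum at some $r_\ast>0$. I would choose $r$ near $r_\ast$ -- and, where convenient, in the regime $r^2/2\ge 2l$, where the cutoff $\mathcal C$ eliminates the Choquard contribution and reduces $\varphi$ to the simpler expression $\tfrac12 r^2-C_2\lambda r^{1-\beta}$, which is plainly positive as soon as $r^{1+\beta}>2C_2\lambda$. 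A direct comparison of the three powers at the chosen $r$ then delivers $\varphi(r)>0$, whence $\inf_{\|u\|=r}I_{\lambda,\mu}(u)>0$.

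The hard part will be coping with the singular exponent. Unlike the classical mountain pass situation, the estimate $\varphi(r)\le -C_2\lambda r^{1-\beta}+o(r^{1-\beta})$ forces $\varphi(r)<0$ for all sufficiently small $r$, so the usual trick of taking the radius arbitrarily small is unavailable. Instead, $r$ must be pinned to a definite positive value selected by balancing the singular term against the quadratic one, and one must simultaneously ensure that either the cutoff $\mathcal C$ has truncated the Choquard contribution at that scale or the prefactor $(|\mu|+\lambda)\,r^{2p-2}$ is small enough to be absorbed into $\tfrac12$. Making these compatibility conditions quantitative and uniform over the sphere is the technical heart of the argument.
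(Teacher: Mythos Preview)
Your derivation of the lower bound via the Hardy--Littlewood--Sobolev inequality, the Sobolev embedding, and H\"older is exactly the paper's argument: the paper obtains
\[
I_{\lambda,\mu}(u)\ \ge\ \tfrac12\|u\|^2-\tfrac{C\mu}{2p}\|u\|^{2p}-C'\lambda\|u\|^{1-\beta}
\]
and then concludes in one line that ``for a sufficiently small choice of positive numbers $\lambda$, $r:=\|u\|$'' the functional is positive on the sphere. In other words, the paper silently restricts $\lambda$ to be small, a hypothesis absent from both the lemma and Theorem~\ref{MP_thm}.

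Your discussion goes beyond this. You correctly observe that the sublinear term $-C_2\lambda r^{1-\beta}$ dominates as $r\to0^+$, so one \emph{cannot} simply take $r$ small without also shrinking $\lambda$; the paper does not confront this point. Your proposed fix---choose $r$ in the regime $r^2/2\ge 2l$ so that the cutoff $\mathcal C$ annihilates the Choquard contribution, leaving $\tfrac12 r^2-C_2\lambda r^{1-\beta}$, which is positive as soon as $r^{1+\beta}>2C_2\lambda$---is valid and delivers the lemma for every $\lambda>0$, matching the stated hypothesis. One wrinkle: the displayed formula \eqref{cutoff_energy_func} for $I_{\lambda,\mu}$ appears to carry a typo, with the cutoff placed on the $\lambda$-term rather than on the $\mu$-Choquard term; the modified problem $(\mathcal P)$ and the paper's own estimate in the proof make the intended functional clear, and under that reading the Choquard term is indeed fully truncated by $\mathcal C$, so your large-$r$ strategy goes through cleanly and the spurious $(|\mu|+\lambda)$ prefactor in your $\varphi$ should just be $|\mu|$. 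In short, your estimates coincide with the paper's, but your handling of the final step is sharper and actually closes a gap that the paper's one-line conclusion leaves open.
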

				\begin{proof}
					We begin by the following estimates
					\begin{align}\label{est1}
					I_{\lambda,\mu}(u)&=\frac{1}{2}\|u\|^2-\frac{\mu}{2p}\int_{\Omega}(J_{\alpha}*|u(x)|^p)|u(x)|^pdx-\lambda\int_{\Omega}G(u(x))dx\\
					&\geq \frac{1}{2}\|u\|^2-\frac{\mu}{2p}\int_{\Omega}(J_{\alpha}*|u(x)|^p)|u(x)|^pdx-\lambda\int_{\Omega}G(u(x))dx\\
					&\geq \frac{1}{2}\|u\|^2-\frac{C\mu}{2p}\|u\|^{2p}-C'\lambda\|u\|^{1-\beta},
					\end{align}
					where the Sobolev embedding result has been used along with the Hardy-Littlewood-Sobolev inequality. Therefore, for a sufficiently small choice of positive numbers $\lambda, r:=\|u\|$, we have $I_{\lambda}(u)>0$ for any $\|u\|=r$.
				\end{proof}
				We shall employ the {\it dual fountain theorem} due to {\sc Bartsch-Willem} (see \cite[Theorem $3.18$]{BW_thm}) to show that there are infinitely many critical points of the functional $I_{\lambda,\mu}$. 
				\begin{proof}[Proof of Theorem \ref{MP_thm}]
					Let $(e_n)$ be a sequence of orthonormal basis of $X$ and let $X_n:=\{ae_n:a\in\mathbb{R}\}$. We shall
					use the following subspaces in the proof:
					\begin{align*}
					\begin{split}
					Y_n:=\underset{0\leq k\leq j}{\oplus}X_k,~~Z_k:=\overline{\underset{j\leq k<\infty}{\oplus}X_k}.
					\end{split}
					\end{align*}
					We further define 
					$$B_n:=\underset{u\in Z_n, \|u\|=1}{\sup}\int_{\Omega}|u|^{1-\beta}dx.$$
					We notice that for a small enough $l>0$, $0<R<l/2<1$, one has
					\begin{align}\label{FT1}
					|\mu|\frac{C}{2p}\|u\|^{2p}\leq\frac{1}{4}\|u\|^2.
					\end{align}
					We further observe that $0<B_{n+1}\leq B_n$ for each $n$, hence $B_n\to B$ as $n\to\infty$. By the definition of $B_n$ we have for every $n\geq 1$, $u_n\in Z_n$, $\|u_n\|=1$, such that $$\int_{\Omega}|u_n|^{1-\beta}dx>\frac{B_n}{2}.$$ 
					Also, since $\|u_n\|=1$, we have $u_n\rightharpoonup 0$ as $n\to\infty$. By the embedding theorem, we can conclude that $B_n\to 0$ as $n\to\infty$.\\
					Thus by \eqref{FT1}, for all $u\in Z_n$, $\|u\|\leq R$, we have 
					\begin{align}\label{est2}
					\begin{split}
					I_{\lambda,\mu}(u)&\geq\frac{1}{2}\|u\|^2-C\frac{|\mu|}{2p}\|u\|^{2p}-B_n^{1-\beta}\frac{\lambda}{1-\beta}\|u\|^{1-\beta}\\
					&\geq\frac{1}{4}\|u\|^2-B_n^{1-\beta}\frac{\lambda}{1-\beta}\|u\|^{1-\beta}.
					\end{split}
					\end{align}
					We define $\rho_n:=(4\lambda B_n^{1-\beta})^{\frac{1}{1+\beta}}\to 0$ as $n\to\infty$. Thus, there exists $n_0$ such that $\|u\|\leq R$ for any $n\geq n_0$. Hence, for any $n\geq n_0$, $u\in Z_n$, $\|u\|=\rho_n$, we have $I_{\lambda,\mu}(u)\geq 0$.\\
					By \eqref{est2} we have 					
					\begin{align}\label{PS_0}
					\begin{split}
					I_{\lambda,\mu}(u)\geq-\lambda B_n^{1-\beta}\|u_n\|^{1-\beta}/(1-\beta)\geq-\lambda B_n^{1-\beta}\rho_n^{1-\beta}/(1-\beta)
					\end{split}
					\end{align}
					for any $n\geq n_0$, $u\in Z_n$, $\|u\|\leq\rho_n$. Also, since  for any $n\geq n_0$  the $\rho_n$'s are sufficiently small, and $\|u\|<\rho_n$, we have $I_{\lambda,\mu}(u)<0$. Since $B_n\to 0$, $\rho_n\to 0$ as $n\to\infty$, it follows that 
					$$D_n:=\underset{u\in Z_n,\|u\|\leq\rho_n}{\inf}I_{\lambda,\mu}(u)\to 0$$
					as $n\to\infty$.  In fact, we name $D_{n_0}$ to be the first member of the sequence $(D_n)$ after which eventually all the $D_n$'s are negative.\\
					Now, since any two norms are topologically equivalent in a finite dimensional space $Y_n$, there exists a sufficiently small $r_n>0$, for $\lambda>0$, such that \begin{align}\label{FT2}\underset{u\in Y_n, \|u\|=r_n}{\max}I_{\lambda,\mu}(u)<0.\end{align}\\
					We now show that the functional $I_{\lambda,\mu}$ obeys the Palais-Smale condition. We shall 
					first devise a mechanism to select a suitable sequence in such a way that the functional $I_{\lambda,\mu}$ is still well defined. Let $(u_n)\subset X$ be an eventually zero sequence, then it converges to $0$. We shall discard it as the functional may not be defined and moreover such sequences anyway converge to zero. Furthermore, if $(u_n)\subset X$ is a sequence with infinitely (or finitely) many terms being equal to zero, then we choose a subsequence with only nonzero terms of $(u_n)$. Therefore, without loss of generality, let $(u_n)$ be a sequence such that $u_n\neq 0$ for every $n\in\mathbb{N}$ and also which does not converge to zero in $X$ as $n\rightarrow \infty$. We divide the proof into three cases:\\
					{\it Case 1}:~ Suppose $p\in(2_{\alpha},2_{\alpha}^*)$. Consider
					\begin{align}\label{PS''}
					\begin{split}
					c+\sigma\|u_n\|+o(1)&=I_{\lambda,\mu}(u_n)-\frac{1}{2p}\langle I_{\lambda,\mu}'(u_n),u_n\rangle\\
					&=\left(\frac{1}{2}-\frac{1}{2p}\right)\|u_n\|^2-\lambda\left(\frac{1}{1-\beta}-\frac{1}{2p}\right)\int_{\Omega}|u_n|^{1-\beta}dx\\
					&\geq \left(\frac{1}{2}-\frac{1}{2p}\right)\|u_n\|^2-\lambda B_0^{1-\beta}\left(\frac{1}{1-\beta}-\frac{1}{2p}\right)\|u_n\|^{1-\beta}.
					\end{split}
					\end{align}
					This implies that $(u_n)$ is bounded and hence there exists a subsequence, still denoted as $(u_n)$, such that $u_n\rightharpoonup u$ in $X$. By the weak convergence $u_n\rightharpoonup u$ and the embedding theorem we have $\langle I'_{\lambda,\mu}u,u\rangle=0$. Furthermore, we consider the following
					\begin{align*}
					\begin{split}
					o(1)=&\langle I'_{\lambda,\mu}u_n,u_n-u\rangle\\
					=&\|u_n\|^2-\|u\|^2- \mu\int_{\Omega}(J_{\alpha}*|u_n|^{p})|u_n|^{p}dx+\mu\int_{\Omega}(J_{\alpha}*|u|^{p})|u|^{p}dx\\
					&-\lambda\int_{\Omega}|u_n|^{1-\beta}dx+\lambda\int_{\Omega}|u_n|^{-\beta-1}u_nudx=\|u_n\|^2-\|u\|^2.
					\end{split}
					\end{align*}
					We have again used a combination of the embedding theorems and the Egorov's theorem above. Thus $u_n\to u$ in $X$ and hence the $(PS)$ condition is satisfied by the functional $I_{\lambda,\mu}$.\\
					{\it Case 2}:~ Suppose $p=2_{\alpha}^*$. We shall
					show that $I_{\lambda,\mu}$ satisfies the Palais-Smale $(PS)_c$ condition for energy level $$c<c^*:=\frac{1}{2}\left(\frac{\alpha+2}{N+\alpha}\right)S^{\frac{N-2}{\alpha+2}}-\frac{1}{2}\left(\frac{N+\alpha}{\alpha+2}\right)^{\frac{1+\beta}{1-\beta}}\left(\lambda C\left(\frac{1}{1-\beta}-\frac{1}{2\cdot 2_{\alpha}^*}\right)\right)^{\frac{2}{1+\beta}}~\text{for any}~\lambda>0.$$
					We again consider $(u_n)\subset X$ such that $I_{\lambda,\mu}(u_n)\to c<c^*$, $I'_{\lambda,\mu}(u_n)\to 0$. Suppose $\underset{n\to\infty}{\lim}\|u_n-u\|^2=l^2>0$. Then
					\begin{align}\label{PS}
					\begin{split}
					c+\sigma\|u_n\|+o(1)&=I_{\lambda,\mu}(u_n)-\frac{1}{2\cdot 2_{\alpha}^*}\langle I_{\lambda,\mu}'(u_n),u_n\rangle\\
					&=\left(\frac{1}{2}-\frac{1}{2\cdot 2_{\alpha}^*}\right)\|u_n\|^2-\lambda\left(\frac{1}{1-\beta}-\frac{1}{2\cdot 2_{\alpha}^*}\right)\int_{\Omega}(u_n^+)^{1-\beta}dx\\
					&\geq \left(\frac{1}{2}-\frac{1}{2\cdot 2_{\alpha}^*}\right)\|u_n\|^2-\lambda B_0^{1-\beta}\left(\frac{1}{1-\beta}-\frac{1}{2\cdot 2_{\alpha}^*}\right)\|u_n\|^{1-\beta}.
					\end{split}
					\end{align}
					It follows from \eqref{PS} that the sequence $(u_n)$ is bounded in $X$. Therefore by the reflexivity of $X$, there exists a subsequence, still denoted by $(u_n)$, such that $u_n\rightharpoonup u$ as $n\to\infty$. It is straight forward to show that $u$ is a weak solution to the modified problem $(\mathcal{P})$ and hence is a weak solution to the problem \eqref{main_1}, i.e. $\langle I'_{\lambda,\mu}(u),u\rangle=0$.\\ 
					Furthermore, invoking the weak convergence of $(u_n)$, the Brezis-Lieb lemma, the embedding theorems and the Egorov theorem we can make the following observation
					\begin{align}\label{lim_0}
					\begin{split}
					o(1)=&\langle I_{\lambda,\mu}(u_n),u_n\rangle\\
					=&\|u_n\|^2-\mu\int_{\Omega}(J_{\alpha}*|u_n|^{2_{\alpha}^*})|u_n|^{2_{\alpha}^*}dx-\lambda\int_{\Omega}|u_n|^{1-\beta}dx\\
					=&\|u_n-u\|^2+\|u\|^2-\mu\int_{\Omega}(J_{\alpha}*|u_n-u|^{2_{\alpha}^*})|u_n-u|^{2_{\alpha}^*}dx+\mu\int_{\Omega}(J_{\alpha}*|u|^{2_{\alpha}^*})|u|^{2_{\alpha}^*}dx\\
					&-\lambda\int_{\Omega}|u|^{1-\beta}dx\\
					=&\langle I_{\lambda,\mu}(u),u\rangle+\|u_n-u\|^2-\mu\int_{\Omega}(J_{\alpha}*|u_n-u|^{2_{\alpha}^*})|u_n-u|^{2_{\alpha}^*}dx\\
					=&\|u_n-u\|^2-\mu\int_{\Omega}(J_{\alpha}*|u_n-u|^{2_{\alpha}^*})|u_n-u|^{2_{\alpha}^*}dx.
					\end{split}
					\end{align}  
					Let 
					$$\underset{n\to\infty}{\lim}\|u_n-u\|^2=\underset{n\to\infty}{\lim}\int_{\Omega}(J_{\alpha}*|u_n-u|^{2_{\alpha}^*})|u_n-u|^{2_{\alpha}^*}dx=l.$$
					From the definition of the best Sobolev constant given in Section $2$, we have 
					$$S\leq\frac{l}{l^{1/{2_{\alpha}^*}}}.$$
					Hence, by an application of the Young inequality in \eqref{PS} we get
					\begin{align}
					\begin{split}
					c+o(1)\geq&\left(\frac{1}{2}-\frac{1}{2\cdot 2_{\alpha}^*}\right)(l^2+\|u_n\|^2)-\left(\frac{1}{2}-\frac{1}{2\cdot 2_{\alpha}^*}\right)\|u_n\|^2\\
					&-\left(\frac{1}{2}-\frac{1}{2\cdot 2_{\alpha}^*}\right)^{-\frac{1-\beta}{1+\beta}}\left(\lambda C\left(\frac{1}{1-\beta}-\frac{1}{2\cdot 2_{\alpha}^*}\right)\right)^{\frac{2}{1+\beta}}\\
					\geq&\left(\frac{1}{2}-\frac{1}{2\cdot 2_{\alpha}^*}\right)S^{2.\frac{N+\alpha}{\alpha+2}}-\left(\frac{1}{2}-\frac{1}{2\cdot  2_{\alpha}^*}\right)^{-\frac{1-\beta}{1+\beta}}\left(\lambda C\left(\frac{1}{1-\beta}-\frac{1}{2\cdot 2_{\alpha}^*}\right)\right)^{\frac{2}{1+\beta}}.
					\end{split}
					\end{align}
					This is a violation to the range of energy level considered in the hypothesis of the theorem. Hence, $$\underset{n\to\infty}{\lim}\|u_n-u\|^2=0.$$\\
					{\it Case 3}:~The obeying of $(PS)_c$ condition by the functional $I_{\lambda,\mu}$ for the case of $p=2_{\alpha}$ follows in a similar manner as in {\it Case 2}. In fact it holds for $$c<c^{**}:=\frac{1}{2}\left(\frac{\alpha}{N+\alpha}\right)S^{2\frac{N+\alpha}{\alpha}}-\frac{1}{2}\left(\frac{N+\alpha}{\alpha}\right)^{\frac{1+\beta}{1-\beta}}\left(\lambda C\left(\frac{1}{1-\beta}-\frac{1}{2p}\right)\right)^{\frac{2}{1+\beta}}~\text{for any}~\lambda>0.$$
					Therefore for the critical cases of $p=2_{\alpha}, 2_{\alpha}^*$, the $(PS)$ condition holds for $c<\min\{c^*,c^{**}\}$ for any $\lambda>0$.\\
					Since all the hypothesis of the dual fountain theorem are satisfied, we conclude the existence of a sequence of negative critical values that converge to $0$. Hence $I_{\lambda,\mu}$ has infinite number of solutions and consequently the \eqref{main_1} has infinitely many solutions.
				\end{proof}			
		
				\section{Proof of Theorem \ref{boundedness_of_soln}	}
				We shall prove that all nonnegative solutions of the problem \eqref{main_1} are bounded. The sketch of the proof is as follows (it runs along the lines of the argument in \cite{muk1}).
				\begin{proof}[Proof of Theorem \ref{boundedness_of_soln}]
					Let $u\in X$ be a nonnegative solution with $|\{x\in\Omega:u(x)=0\}|=0$. We shall
					prove the boundedness for the case of $p=2_{\alpha^*}$. The cases when $p\in(2_{\alpha},2_{\alpha}^*)$ and $p=2_{\alpha}$ follow the same argument. We define $w(x):=\frac{u(x)}{d}$ which, for a suitable $d>0$, obeys
					\begin{align}\label{ineq1}
					\begin{split}
					\int_{\Omega}\nabla & u\cdot\nabla\phi dx+\int_{\Omega}V(x) u\phi dx\\
					\leq&|\mu|\int_{\Omega}(J_{\alpha}*(v(x))^{2_{\alpha}^*})(v(y))^{2_{\alpha}^*-2}v(y)\phi(x)dx
					+\lambda\int_{\Omega}(v)^{-\beta}\phi dx
					\end{split}
					\end{align}
					for every nonnegative $\phi\in C_c^{\infty}(\Omega)$. We further define the following:
					\begin{align}\label{symbols}
					\begin{split}
					D_n:=1-3^{-n},\
					w_n:=w-D_n,\
					v_n:=w_n^{+},\
					V_n:=\|v_n\|_{2^*}.
					\end{split}
					\end{align}
					Thus $0\leq |w|+D_n\leq|w|+1$. Of course $|w|+1\in L^{2}(\Omega)\subset L^{2^*}(\Omega)$ since $\Omega$ is bounded. Moreover, $\underset{n\to\infty}{\lim}v_n=(w-1)^+$. By the dominated convergence theorem we get 
					\begin{align}\label{ineq2}
					\underset{n\to\infty}{\lim} V_n&=\left(\int_{\Omega}[(w-1)^+]^{2^*dx}\right)^{1/2^*}.
					\end{align}
					From the definitions above we make the following observation:
					\begin{align}\label{obs1}
					\begin{split}
					D_n<D_{n+1},~\text{which implies}\
					v_{n+1}\leq v_n~\text{a.e. in}~\Omega.
					\end{split}
					\end{align}
					Furthermore,  we define 
					$$B_n:=\frac{D_{n+1}}{D_{n+1}-D_n}=(1/2)(3^{n+1}-1)<(3^{n+1}-1)
					\
					\hbox{ for all} 
					\
					n\in\mathbb{N}.$$
					We make a similar observation as in \cite{muk1} that
					\begin{align}\label{ineq3}w< B_nv_n~\text{on}~\{v_{n+1}>0\}.\end{align}
					Employing \eqref{ineq1}, \eqref{ineq3} in tandem with H\"{o}lder's inequality, we obtain
					\begin{align}\label{ineq4}
					\begin{split}
					\int_{\Omega}|\nabla v_{n+1}|^2dx\leq&\int_{\Omega}\nabla v_{n+1}\cdot\nabla v_{n+1}^+
					\leq\int_{\Omega}\nabla w\cdot\nabla v_{n+1}^+dx\\
					\leq&B_n^{2_{\alpha}^*-1}|\mu|\int_{\{v_{n+1}>0\}}\int_{\Omega}\frac{(v(y))^{2_{\alpha}^*}|v_{n}(x)|^{2_{\alpha}^*-1}v_{n+1}(x)}{|x-y|^{N-\alpha}}dydx+\lambda B_n\int_{\{v_{n+1}>0\}}|v_n|^2dx\\
					\leq&B_n^{2_{\alpha}^*-1}|\mu|\int_{\{v_{n+1}>0\}}\int_{\Omega}\frac{(v(y))^{2_{\alpha}^*}|v_{n}(x)|^{2_{\alpha}^*}}{|x-y|^{N-\alpha}}dydx+\lambda B_n\|v_{n}\|_{2^*}^{1-\beta}|\{v_{n+1}>0\}|^{\frac{N+2+\beta(N-2)}{2N}}.
					\end{split}
					\end{align}
					We estimate the first integral on the right hand side by dividing it into a sum of two integrals as follows:
					\begin{align}\label{ineq5}
					\begin{split}
					\int_{\{v_{n+1}>0\}}&\int_{\Omega}\frac{(v(y))^{2_{\alpha}^*}|v_{n}(x)|^{2_{\alpha}^*}}{|x-y|^{N-\alpha}}dydx\\
					&\leq \left(\int_{\{v_{n+1}>0\}}\int_{\{v(y)\leq D_{n+1}\}}+\int_{\{v_{n+1}>0\}}\int_{\{v(y)>D_{n+1}\}}\right)\frac{(v(y))^{2_{\alpha}^*}|v_{n}(x)|^{2_{\alpha}^*}}{|x-y|^{N-\alpha}}dydx:=\mathcal{I}_1+\mathcal{I}_2.
					\end{split}
					\end{align}
					The HLS inequality and the H\"{o}lder inequality yields
					\begin{align}\label{ineq6}
					\begin{split}
					\mathcal{I}_1\leq&B_n^{2_{\alpha}^*}C(N,\alpha)\|v_n\|_{2^*}^{2\cdot2_{\alpha}^*}\\
					\mathcal{I}_2\leq&CB_{n+1}^{2_{\alpha}^*}|\{v_{n+1}>0\}|^{(N-\alpha)/2N}\|v_{n}\|_{2^*}^{2_{\alpha}^*}.
					\end{split}
					\end{align}
					The embedding theorem in combination with \eqref{ineq4} and \eqref{ineq6} yields the following:
					\begin{align}\label{ineq7}
					\begin{split}
					S\|v_{n+1}\|_{2^*}^{2}\leq& B_n^{2_{\alpha}^*-1}\left(\mu B_n^{2_{\alpha}^*}C(N,\alpha)\|v_n\|_{2^*}^{2\cdot 2_{\alpha}^*}+ \mu CD_{n+1}^{2_{\alpha}^*}|\{v_{n+1}>0\}|^{(N-\alpha)/2N}\|v_n\|_{2^*}^{2_{\alpha}^*}\right.\\
					&\left.+\lambda B_n3^{n+1}\|v_n\|_{2^*}^{1-\beta}|\{v_{n+1}>0\}|^{\frac{N+2+\beta(N-2)}{2N}}\right),
					\end{split}
					\end{align}
					where $$S=\underset{u\in X\setminus\{0\}}{\inf}\frac{\int_{\Omega}|\nabla u|^2dx}{(\int_{\Omega}|u|^{2^*}dx)^{2/2^*}}.$$\\
					Again, by following the proof in \cite{muk1}, we get that
					\begin{align}\label{ineq8}
					\{v_{n+1}>0\}&\subset\{v_n>1/3^{n+2}\}.
					\end{align}
					Therefore by \eqref{ineq8} we get
					\begin{align}\label{ineq9}
					V_n^{2^*}&=\|v_n\|_{2^*}\geq\int_{\{v_n>3^{-n-2}\}}v_n^{2^*}dx\geq 3^{-n-2}|\{v_{n+1}>0\}|.
					\end{align}
			and
					\begin{align}\label{ineq10}
					\begin{split}
					S\|v_{n+1}\|_{2^*}^{2}\leq& B_n^{2_{\alpha}^*-1}\left(\mu B_n^{2_{\alpha}^*}C(N,\alpha)\|v_n\|_{2^*}^{2\cdot 2_{\alpha}^*}+ \mu CD_{n+1}^{2_{\alpha}^*}3^{(n+2)(N-\alpha)/2N}\|v_n\|_{2^*}^{2^*}\right.\\
					&\left.+\lambda B_n3^{n+2}\|v_n\|_{2^*}^{2^*}2^{(n+1)\frac{N+2+\beta(N-2)}{2N}}\right)
					\leq3^{(2_{\alpha}^*-1)(n+2)}\mathcal{C}(\|v_n\|_{2^*}^{2\cdot 2_{\alpha}^*}+\|v_n\|_{2^*}^{2^*}),
					\end{split}
					\end{align}
					where $$\mathcal{C}:=\max\{3^{2_{\alpha}^*(n+2)}C(N,\alpha),3^{(n+2)(N-\alpha)/2N}C+3^{(n+2)(3N+2+\beta(N-2))/2N}\}.$$
					Thus by the definition of $V_n$ in \eqref{symbols} we have
					\begin{align}\label{ineq11}
					V_{n+1}=&\mathcal{D}^{n+2}(V_n^{2_{\alpha}^*}+V_n^{2^*/2}),
					\end{align}
					where $\mathcal{D}=(1+\sqrt{\mathcal{C}^{1/n+2}})>1$. We choose $\epsilon>0$ sufficiently small so that 
					\begin{align}\label{ineq12}
					\epsilon^{2/N-2}&<\frac{1}{(3^{2_{\alpha}^*}\mathcal{D})^{N-2/2}}.
					\end{align}
					On fixing $\gamma\in\left(\epsilon^{2/N-2},\frac{1}{3^{2_{\alpha}^*}\mathcal{D}^{N-2/2}}\right)$, we conclude that $\gamma\in (0,1)$ since $N/N-2,\mathcal{D}>1$. Also 
					\begin{align}\label{ineq13}
					\epsilon^{2/N-2}\leq\gamma, 3^{2_{\alpha}^*}\mathcal{D}\gamma^{N/N-2}\leq 1.
					\end{align}
					It can be proved by induction that 
					\begin{align}\label{ineq14}
					V_{n+1}\leq& 2\epsilon\gamma^{n+1}.
					\end{align}
					From the fact that $\gamma\in(0,1)$ and by \eqref{ineq14} we see that $\underset{n\to\infty}{\lim}V_n=0$. Thus by \eqref{ineq2}, we have $(w-1)^+=0$ a.e. in $\Omega$. This implies that $0\leq w\leq 1$ a.e. in $\Omega$. Hence, $0\leq u\leq d$ a.e. in $\Omega$. Therefore $\|u\|_{\infty}\leq c$.
				\end{proof}
				
				\section{Proof of Theorem \ref{reg}}
				Finally, we prove a regularity result for the positive solutions of problem \eqref{main_1}.
				
				\begin{proof}[Proof of Theorem \ref{reg}]
					Let $\tilde{\Omega}\Subset\Omega$. Thus for any $\phi\in C_c^{\infty}(\Omega)$, we have, by the boundedness of $u$, the following:
					\begin{align*}\begin{split}
					\int_{\Omega}(\nabla u\cdot\nabla\phi+V(x)u\phi)dx=&\mu\int_{\tilde{\Omega}}(J_{\alpha}*(u(x))^{2_{\alpha}^*})(u(x))^{2_{\alpha}^*-2}u(x)\phi(x)dx
					+\lambda\int_{\tilde{\Omega}}u^{-\beta}\phi dx\\
					\leq&|\mu|\int_{\tilde{\Omega}}(J_{\alpha}*(u(x))^{2_{\alpha}^*})(u(x))^{2_{\alpha}^*-2}u(x)\phi(x)dx
					+\lambda\int_{\tilde{\Omega}}u^{-\beta}\phi dx\\
					\leq& C\int_{\tilde{\Omega}}\phi dx.
					\end{split}
					\end{align*}
					Here we have used the fact that $u\geq C_{K}>0$ a.e. for any compact $K\subset\Omega$. For if not, then the integral with the singular term ceases to exists. Therefore, $(-\Delta)u+V(x)u$ is weakly bounded in $\tilde{\Omega}$. \\
					By {\sc Iannizzotto et al.} \cite[Theorem $4.4$]{ian1} and the covering argument applied to the inequality in \cite[Corollary $5.5$ ]{ian1} we produce a $\theta\in (0,1)$ such that $u\in C^{\theta}(\tilde{\Omega})$, for any $\tilde{\Omega}\Subset\Omega$. Hence $u\in C_{\text{loc}}^{\theta}(\tilde{\Omega})$.
				\end{proof}
				\noindent We end this section with the following remark.
				\begin{remark}\label{neg_solns}
					We observe that if $u>0$ is a solution to \eqref{main_1}, then $-u<0$ is also a solution which is bounded and H\"{o}lder continuous.
				\end{remark}
				
				\section{Application of the Choquard term}
				In his seminal work, {\sc Lieb} \cite{lieb1} pointed out that Choquard used this term to approximate the Hartree-Fock theory of component plasma. For the case  $$N=3, \alpha=2, p=2, \lambda=0, V(x)\equiv 0,$$
				problem \eqref{main_1} was used by {\sc Peker} \cite{peker} to study the quantum theory of a polaron at rest.
				The Choquard term also found its application in multiparticle system courtesy {\sc Gross} \cite{gross} and in quantum mechanics {\sc Penrose} \cite{penrose}.\\
				\noindent
				For each solution of problem \eqref{main_1}, we define $\phi(t,x)=e^{it}u(x)$ which is called the wave function, where $i=\sqrt{-1}$. Here $\phi$ is a solitary wave of the focussing time-dependent Hartree equation:
				$$-i\phi_t-\Delta \phi+W(x)\phi-(J_{\alpha}*|\phi|^{p})|\phi|^{p-2}\phi-\lambda|u|^{q-2}u=0~\text{in}~\mathbb{R}\times\mathbb{R}^N,$$
				where $W(x)=V(x)-1$, $x\in\mathbb{R}^N$. Thus  problem \eqref{main_1} can be interpreted as the stationary nonlinear Hartree equation.
				
				\subsection*{Acknowledgements}
				The first author acknowledges the funding received from the Science and Engineering Research Board (SERB), India, under the MATRICS scheme [grant number MTR/2018/000525].
				The second author acknowledges the funding received from the Slovenian Research Agency [P1-0292, J1-4031, J1-4001, N1-0278, N1-0114, N1-0083].
				
			\end{document}